\newcommand{\R}{\mathbb R}
\newcommand{\peq}{\hspace*{0.10in}}
\newtheorem{theorem}{Theorem}[section]
\newtheorem{proposition}[theorem]{Proposition}
\newtheorem{remark}[theorem]{Remark}
\newtheorem{lemma}[theorem]{Lemma}
\begin{document}
\vglue-1cm \hskip1cm
\title[Global Well-posedness and blow-up for the INLS equation]{Global well-posedness and blow-up on the energy space for the Inhomogeneous Nonlinear Schr\"odinger Equation}



\author[L. G. Farah]{Luiz G. Farah}
\address{ICEx, Universidade Federal de Minas Gerais, Av. Ant\^onio Carlos, 6627, Caixa Postal 702, 30123-970,
Belo Horizonte-MG, Brazil}
\email{lgfarah@gmail.com}

\begin{abstract}
 We consider the supercritical inhomogeneous nonlinear Schr\"odinger equation (INLS)
 $$i\partial_t u+\Delta u+|x|^{-b}|u|^{2\sigma}u=0,$$
where $(2-b)/N<\sigma<(2-b)/(N-2)$ and $0<b<\min\{2,N\}$. 
We prove a Gagliardo-Nirenberg type estimate and use it to establish sufficient conditions for global existence and blow-up in $H^1(\mathbb{R}^N)$. 
\end{abstract}

\maketitle

\section{Introduction}\label{introduction}

In this work we consider the initial value problem (IVP) associated with the supercritical inhomogeneous nonlinear Schr\"odinger equation (INLS)
\begin{equation}\label{INLS}
\begin{cases}
i\partial_t u+\Delta u+|x|^{-b}|u|^{2\sigma}u=0, \;\;x\in\R^N, \;t>0, \\
u(x,0)=u_0(x).
\end{cases}
\end{equation}

This model arises naturally in nonlinear optics for the propagation of laser
beams. The case $b=0$ is the classical nonlinear Schr\"odinger equation studied by several authors in the past years.

A more general form of equation \eqref{INLS}, namely,
$$
i\partial_t u+\Delta u+k(x)|u|^{2\sigma}u=0,
$$
was considered by Merle \cite{M96} and Rapha\"el and Szeftel \cite{RS11} where they study the problem of existence/nonexistence of minimal mass blow-up solutions. However, in both papers, the authors assume that $k(x)$ is bounded which is not verified in our case.

Another type of the inhomogeneous nonlinear Schr\"odinger equation (INLS), but with nonlinearity of the form $|x|^{b}|u|^{2\sigma}u$ with $b>0$, was studied by Chen and Guo \cite{CG07} and Chen \cite{C10}. In these papers the authors obtain certain conditions for global existence and blow-up in the set of radial symmetric functions in $H^1(\mathbb{R}^N)$. The method of the proof was based in variational arguments, however the precise threshold level is not given in terms of ground state solutions.

Before review the state of the art concerning the IVP \eqref{INLS}, let us recall the best Sobolev index where we can expect well-posedness for this model. First, note that if $u(x,t)$ is a solution of \eqref{INLS} so is $u_{\lambda}=\lambda^{\frac{2-b}{2\sigma}}u(\lambda x, \lambda^2 t)$, for all $\lambda >0$. Computing the homogeneous Sobolev norm we have
$$
\|u_\lambda(\cdot,0)\|_{\dot{H}^s}=\lambda^{s+\frac{2-b}{2\sigma}-\frac{N}{2}}\|u_0\|_{\dot{H}^s}.
$$
Thus the critical Sobolev index is given by $s_{\sigma}=N/2-(2-b)/2\sigma$. In this paper, we are interest in the $L^2$-supercritical and $H^1$-subcritical case. Therefore we restrict our attention to the cases where $0<s_{\sigma}<1$. Rewriting this last condition in terms of $\sigma$ we obtain
$$
\dfrac{2-b}{N}<\sigma<2^{\ast},
$$
where $2^{\ast}=(2-b)/(N-2)$, if $N\geq 3$ or $2^{\ast}=\infty$, if $N=1,2$. To avoid $\sigma$ to be negative, we also assume the technical restriction $0<b<\min\{2,N\}$.

Furthermore, the INLS equation \eqref{INLS} has the following conserved quantities
\begin{equation}\label{MC}
Mass\equiv M[u(t)]=\int_{\R^N} |u(x,t)|^2\, dx
\end{equation}
and
\begin{equation}\label{EC}
Energy\equiv E[u(t)]=\frac{1}{2}\int_{\R^N}|\nabla u(x,t)|^2\;dx-\frac{1}{2\sigma+2}\int_{\R^N} |x|^{-b}|u(x,t)|^{2\sigma+2}\;dx.
\end{equation}

The well-posedness theory for the INLS equation \eqref{INLS} was already studied by Genoud and Stuart \cite{GS08} (see also references therein). Using the abstract theory developed by Cazenave \cite{C03}, the authors proved that the IVP \eqref{INLS} is well-posed in $H^1(\R^N)$
\begin{itemize}
\item locally if $0<\sigma < 2^{\ast}$;
\item globally for any initial data in $H^1(\R^N)$ if $0<\sigma < (2-b)/N$;
\item globally for small initial data if $(2-b)/N \leq \sigma < 2^{\ast}$.
\end{itemize}

Moreover, in the limiting case $\sigma=(2-b)/N$ ($L^2$-critical INLS equation) Genoud \cite{G2012} showed how small should be the initial data to have global well-posedness. Indeed, he proved global well-posedness in $H^1(\R^N)$ assuming
\begin{equation}\label{smallness}
\|u_0\|_{L^2(\R^N)}< \|Q\|_{L^2(\R^N)},
\end{equation}
where $Q$ is the unique non-negative, radially-symmetric, decreasing solution of the equation
\begin{equation}\label{ground2}
\Delta Q-Q+|x|^{-b}|Q|^{\frac{2(2-b)}{N}}Q=0.
\end{equation}

Genoud's result is in fact an extension for the INLS model of the classical global well-posedness result proved by Weinstein \cite{W83} for the NLS equation. In \cite{W83} the author proved that solutions for the $L^2$-critical NLS equation (equation \eqref{INLS} with $b=0$ and $\sigma=2/N$) are global in $H^1(\R^N)$ if we assume the smallness condition \eqref{smallness}, where in this case $Q$ stands to be the solution of equation \eqref{ground2} with $b=0$. 

Another extension of Weinstein's result was obtained by Holmer and Roudenko \cite{HR07} for the $L^2$-supercritical and $H^1$-subcritical NLS equation (see also Holmer and Roudenko \cite{HR08} and  Duyckaerts, Holmer and Roudenko \cite{DHR08}). Indeed, the authors established sufficient conditions on the initial data to obtain global and blow-up solutions in $H^1(\R^N)$. More precisely, they prove the following 
\begin{theorem}[\!\!\cite{HR07}]\label{HR}
Let $u(t)$ be an $H^1(\R^N)$ solution to \eqref{INLS} with $b=0$ and $s_{\sigma}=N/2-1/\sigma$. Suppose
$$
E[u_0]^{s_{\sigma}} M[u_0]^{1-s_{\sigma}} < E[Q]^{s_{\sigma}} M[Q]^{1-s_{\sigma}}.
$$
\begin{itemize}
\item[(i)] If $\|\nabla u_0\|_{L^2(\R^N)}^{s_{\sigma}}\|u_0\|_{L^2(\R^N)}^{1-s_{\sigma}} < \|\nabla Q\|_{L^2(\R^N)}^{s_{\sigma}}\|Q\|_{L^2(\R^N)}^{1-s_{\sigma}}$ then the solution $u$ is globally defined.
\item [(ii)] If $\|\nabla u_0\|_{L^2(\R^N)}^{s_{\sigma}}\|u_0\|_{L^2(\R^N)}^{1-s_{\sigma}} > \|\nabla Q\|_{L^2(\R^N)}^{s_{\sigma}}\|Q\|_{L^2(\R^N)}^{1-s_{\sigma}}$ and $u_0$ has finite variance, i.e. $|x|u_0\in L^2(\R^N)$, then the solution $u$ blows-up in finite time.
\end{itemize}
\end{theorem}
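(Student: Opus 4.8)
The plan is to combine the sharp Gagliardo--Nirenberg inequality with the conserved quantities \eqref{MC}--\eqref{EC} and, for the blow-up part, with a virial identity. The guiding principle is that both $\|\nabla u\|_{L^2}^{s_\sigma}\|u\|_{L^2}^{1-s_\sigma}$ and $E[u]^{s_\sigma}M[u]^{1-s_\sigma}$ are invariant under the scaling $u\mapsto u_\lambda$, so the hypotheses pick out a scale-invariant threshold fixed by the ground state $Q$. First I would record the sharp Gagliardo--Nirenberg inequality for $b=0$,
$$\|u\|_{L^{2\sigma+2}}^{2\sigma+2}\le C_{GN}\,\|\nabla u\|_{L^2}^{N\sigma}\|u\|_{L^2}^{2\sigma+2-N\sigma},$$
whose optimal constant is attained at the ground state $Q$ solving $\Delta Q-Q+|Q|^{2\sigma}Q=0$. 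Testing this equation against $Q$ and against $x\cdot\nabla Q$ produces the two Pohozaev identities, from which one solves for $\|\nabla Q\|_{L^2}^2$, $\|Q\|_{L^2}^2$ and $\|Q\|_{L^{2\sigma+2}}^{2\sigma+2}$ in terms of one another; in particular $E[Q]=\tfrac{N\sigma-2}{2N\sigma}\|\nabla Q\|_{L^2}^2>0$ because $N\sigma>2$ in the $L^2$-supercritical regime, and $C_{GN}$ is expressed explicitly through $\|\nabla Q\|_{L^2}$ and $\|Q\|_{L^2}$.

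Next, inserting the sharp inequality into \eqref{EC} and using mass conservation gives the lower bound $E[u_0]\ge g\big(\|\nabla u(t)\|_{L^2}\big)$, where $g(y)=\tfrac12 y^2-\tfrac{C_{GN}}{2\sigma+2}M[u_0]^{(2\sigma+2-N\sigma)/2}\,y^{N\sigma}$. Since $N\sigma>2$, the function $g$ vanishes to second order at $0$, rises to a single interior maximum at some $y_\ast>0$, and then decreases. A direct computation from $g'(y_\ast)=0$ yields $g(y_\ast)=\tfrac{N\sigma-2}{2N\sigma}y_\ast^2$ and $y_\ast=\|\nabla Q\|_{L^2}\big(\|Q\|_{L^2}/\|u_0\|_{L^2}\big)^{(1-s_\sigma)/s_\sigma}$; comparing with the formula for $E[Q]$ shows that $g(y_\ast)=E[Q]\big(\|Q\|_{L^2}/\|u_0\|_{L^2}\big)^{2(1-s_\sigma)/s_\sigma}$, so $E[u_0]<g(y_\ast)$ is \emph{equivalent} to the hypothesis $E[u_0]^{s_\sigma}M[u_0]^{1-s_\sigma}<E[Q]^{s_\sigma}M[Q]^{1-s_\sigma}$. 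Under this hypothesis the continuous map $t\mapsto\|\nabla u(t)\|_{L^2}$ (furnished by the local $H^1$ well-posedness of \cite{GS08}) can never attain the level $y_\ast$, since there $g(y_\ast)>E[u_0]$ would violate the bound; hence $\|\nabla u(t)\|_{L^2}$ stays on the same side of $y_\ast$ as $\|\nabla u_0\|_{L^2}$ throughout the lifespan.

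In case (i), which by the scaling identity above is exactly $\|\nabla u_0\|_{L^2}<y_\ast$, this trapping gives a uniform bound on $\|\nabla u(t)\|_{L^2}$; together with mass conservation it bounds $\|u(t)\|_{H^1(\R^N)}$, and the blow-up alternative then forces global existence. In case (ii) the solution is trapped above $y_\ast$, and I would upgrade this into a quantitative gap: there exists $\delta>0$ with $\|\nabla u(t)\|_{L^2}^{s_\sigma}\|u(t)\|_{L^2}^{1-s_\sigma}\ge(1+\delta)\,\|\nabla Q\|_{L^2}^{s_\sigma}\|Q\|_{L^2}^{1-s_\sigma}$ for all $t$. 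Feeding this, via the sharp Gagliardo--Nirenberg identities, into the virial identity (valid under the finite-variance assumption $|x|u_0\in L^2(\R^N)$),
$$\frac{d^2}{dt^2}\int_{\R^N}|x|^2|u(x,t)|^2\,dx=8N\sigma\,E[u_0]-4(N\sigma-2)\|\nabla u(t)\|_{L^2}^2,$$
yields $V''(t)\le-c<0$ for a fixed $c>0$; since $V(t)\ge0$, this forces the maximal existence time to be finite, i.e.\ finite-time blow-up.

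The step I expect to be the main obstacle is the passage through the threshold analysis: making the correspondence between the scale-invariant hypotheses and the maximum of $g$ exact, and then converting the strict inequalities of case (ii) into the uniform gap $\delta$ that is needed so that $8N\sigma E[u_0]-4(N\sigma-2)\|\nabla u(t)\|_{L^2}^2$ is bounded above by a \emph{strictly} negative constant. The trapping itself is delicate, resting on the continuity of $t\mapsto\|\nabla u(t)\|_{L^2}$ and the conservation of energy and mass; every place where a strict inequality must be preserved uniformly in time is where care is required.
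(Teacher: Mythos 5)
Your part (i) is complete and is essentially the paper's own argument: it is the proof of Theorem \ref{global3} specialized to $b=0$ (sharp Gagliardo--Nirenberg plus the conserved quantities give $E[u_0]\ge g(\|\nabla u(t)\|_{L^2})$, hypothesis \eqref{GR1} is equivalent to $E[u_0]<g(y_*)$, and continuity of $t\mapsto\|\nabla u(t)\|_{L^2}$ traps the gradient below $y_*$, after which mass conservation and the blow-up alternative finish). Part (ii), however, has a genuine gap, and it sits exactly at the step you flag as ``the main obstacle'': you \emph{assert} a uniform $\delta>0$ with $\|\nabla u(t)\|_{L^2}^{s_\sigma}\|u(t)\|_{L^2}^{1-s_\sigma}\ge(1+\delta)\|\nabla Q\|_{L^2}^{s_\sigma}\|Q\|_{L^2}^{1-s_\sigma}$ for all $t$, but give no mechanism producing it. The trapping argument, as you state it, only yields the pointwise strict inequality $\|\nabla u(t)\|_{L^2}>y_*$, which by itself does not preclude $\|\nabla u(t)\|_{L^2}$ from approaching $y_*$, in which case the virial bound would only give $V''(t)<$ (something tending to $0$) rather than $V''(t)\le -c<0$. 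Supplying exactly this quantitative gap is where the paper invests its technical effort: Lemma \ref{Parabola} (the tangent-parabola comparison) and Proposition \ref{Parabola2} convert the strict mass-energy hypothesis into an explicit constant $A>1$ multiplying the threshold, which is then fed into the virial identity \eqref{VE3}.

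The gap is fillable, and in two ways, both shorter than the paper's. First, within your own setup: on $[y_*,\infty)$ the function $g$ is strictly decreasing, so the two facts $g(\|\nabla u(t)\|_{L^2})\le E[u_0]<g(y_*)$ and $\|\nabla u(t)\|_{L^2}>y_*$ force $\|\nabla u(t)\|_{L^2}\ge y_2$ for all $t$ in the lifespan, where $y_2>y_*$ is the unique solution of $g(y_2)=E[u_0]$ in $(y_*,\infty)$; this \emph{is} your uniform gap. Second (this is Holmer--Roudenko's own route), no gradient-side gap is needed at all: when $E[u_0]>0$ (the case $E[u_0]\le 0$ is immediate), fix $\delta_0>0$ with $E[u_0]M[u_0]^{(1-s_\sigma)/s_\sigma}\le(1-\delta_0)E[Q]M[Q]^{(1-s_\sigma)/s_\sigma}$, multiply the virial identity by $M[u_0]^{(1-s_\sigma)/s_\sigma}$, and use the Pohozaev relation \eqref{Eground} (at $b=0$, $8N\sigma E[Q]=4(N\sigma-2)\|\nabla Q\|_{L^2}^2$) together with the merely qualitative trapping $\|\nabla u(t)\|_{L^2}^2M[u_0]^{(1-s_\sigma)/s_\sigma}>\|\nabla Q\|_{L^2}^2M[Q]^{(1-s_\sigma)/s_\sigma}$ to obtain
\begin{equation*}
\left(\frac{d^2}{dt^2}\int_{\R^N}|x|^2|u(x,t)|^2\,dx\right)M[u_0]^{\frac{1-s_\sigma}{s_\sigma}}
<-4(N\sigma-2)\,\delta_0\,\|\nabla Q\|_{L^2}^2M[Q]^{\frac{1-s_\sigma}{s_\sigma}}<0,
\end{equation*}
and integrating twice contradicts the positivity of the variance. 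Either repair completes your proof; the paper's heavier route through Lemma \ref{Parabola} and Proposition \ref{Parabola2} buys an explicit value of the constant $A$ in terms of $\sigma$, $b$, $N$, $Q$, $u_0$, at the cost of substantially more work.
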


Our main interest here is to prove a similar result for the INLS equation \eqref{INLS}. We start showing that the quantity \eqref{EC} is well-defined for functions in $H^1(\R)$. This is guaranteed by the following sharp Gagliardo-Nirenberg inequality
\begin{theorem}\label{best}
Let $\frac{2-b}{N}<\sigma<\frac{2-b}{N-2}$ and $0<b<\min\{2,N\}$,  then the Gagliardo-Nirenberg inequality
\begin{equation}\label{GN}
\int_{\R^N} |x|^{-b}|u(x)|^{2\sigma+2}\;dx \le K_{\rm opt}\,\|\nabla u\|_{L^2(\R^N)}^{N\sigma +b}\|u\|_{L^2(\R^N)}^{2\sigma+ 2-(N\sigma +b)},
\end{equation}
holds, and the sharp constant $K_{\rm opt}>0$ is
explicitly given by
\begin{equation}\label{opt1}
K_{\rm opt}=\left(\frac{N\sigma+b}{2\sigma+2-(N\sigma+b)}\right)^{\frac{2-(N\sigma+b)}{2}} \frac{2\sigma+2}{(N\sigma+b)\|Q\|_{L^2(\R^N)}^{2\sigma}},
\end{equation}
where $Q$ is the unique non-negative, radially-symmetric, decreasing solution of the equation
\begin{equation}\label{ground}
\Delta Q-Q+|x|^{-b}|Q|^{2\sigma}Q=0.
\end{equation}

Moreover the solution $Q$ satisfies the following relations
\begin{equation}\label{groundR1}
\|\nabla Q\|_{L^2(\R^N)}=\left(\dfrac{N\sigma+b}{2\sigma+2-(N\sigma+b)}\right)^{1/2}\|Q\|_{L^2(\R^N)}
\end{equation}
and
\begin{equation}\label{groundR2}
\int_{\R^N} |x|^{-b}|Q(x)|^{2\sigma+2}\;dx=\left(\dfrac{2\sigma+2}{2\sigma+2-(N\sigma+b)}\right)\|Q\|^2_{L^2(\R^N)}
\end{equation}
\end{theorem}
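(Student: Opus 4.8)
The plan is to follow Weinstein's variational strategy, realizing $K_{\rm opt}$ as the reciprocal of the minimum of the \emph{Weinstein functional}
$$
J(u)=\frac{\|\nabla u\|_{L^2(\R^N)}^{N\sigma+b}\,\|u\|_{L^2(\R^N)}^{2\sigma+2-(N\sigma+b)}}{\displaystyle\int_{\R^N}|x|^{-b}|u|^{2\sigma+2}\,dx},\qquad u\in H^1(\R^N)\setminus\{0\}.
$$
Indeed, \eqref{GN} is precisely the assertion that $1/J(u)\le K_{\rm opt}$ for every $u$, so the sharp constant is $K_{\rm opt}=1/\inf_u J(u)$ and an extremizer is a minimizer of $J$. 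The first observation is that $J$ is invariant under the two-parameter rescaling $u\mapsto \lambda\,u(\mu\,\cdot)$: a direct computation shows that the powers of $\lambda$ and of $\mu$ produced in the numerator exactly cancel those in the denominator, thanks to the constraints $(2-b)/N<\sigma<(2-b)/(N-2)$, which guarantee $N\sigma+b>2$ and $2\sigma+2-(N\sigma+b)>0$, so that both exponents are admissible. This scale invariance lets me normalize a minimizing sequence, say by fixing $\|\nabla u\|_{L^2(\R^N)}=\|u\|_{L^2(\R^N)}=1$, turning the problem into maximizing the weighted term $\int_{\R^N}|x|^{-b}|u|^{2\sigma+2}\,dx$ under an $H^1$ bound.

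Next I would establish that the infimum is attained. Since the weight $|x|^{-b}$ is radially symmetric and radially decreasing, Schwarz symmetrization only helps: by the P\'olya--Szeg\H{o} inequality it does not increase $\|\nabla u\|_{L^2(\R^N)}$, it preserves $\|u\|_{L^2(\R^N)}$, and by the Hardy--Littlewood rearrangement inequality it does not decrease $\int_{\R^N}|x|^{-b}|u|^{2\sigma+2}\,dx$; hence $J$ does not increase under symmetrization and the infimum may be sought over non-negative, radially symmetric, decreasing functions. On this class I would use the compactness of the embedding of radial $H^1(\R^N)$ into the weighted Lebesgue space $L^{2\sigma+2}(|x|^{-b}\,dx)$, valid exactly in the subcritical regime $(2-b)/N<\sigma<(2-b)/(N-2)$ (in the spirit of Genoud--Stuart \cite{GS08}), to pass to the limit along a minimizing sequence and produce a genuine minimizer $v$.

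A minimizer $v$ of $J$ satisfies the Euler--Lagrange equation $J'(v)=0$, which after writing $A=\|\nabla v\|_{L^2(\R^N)}^2$, $B=\|v\|_{L^2(\R^N)}^2$ and $C=\int_{\R^N}|x|^{-b}|v|^{2\sigma+2}\,dx$ reads
$$
-(N\sigma+b)\,\frac{\Delta v}{A}+\bigl(2\sigma+2-(N\sigma+b)\bigr)\frac{v}{B}-(2\sigma+2)\,\frac{|x|^{-b}|v|^{2\sigma}v}{C}=0.
$$
Choosing the rescaling $v(x)=\alpha\,Q(\beta x)$ with $\alpha,\beta>0$ adjusted so that the three coefficients match, the equation becomes exactly \eqref{ground}; by the cited uniqueness of the non-negative radial decreasing solution, the normalized minimizer is $Q$. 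It then remains to compute $J(Q)$, and for this I would derive \eqref{groundR1} and \eqref{groundR2} directly from \eqref{ground}. Pairing \eqref{ground} with $Q$ gives $\|\nabla Q\|_{L^2(\R^N)}^2+\|Q\|_{L^2(\R^N)}^2=\int_{\R^N}|x|^{-b}|Q|^{2\sigma+2}\,dx$, while the Pohozaev identity, obtained either by pairing \eqref{ground} with $x\cdot\nabla Q$ or by differentiating the associated action along the dilation $Q(\cdot/\lambda)$ at $\lambda=1$, gives $\tfrac{N-2}{2}\|\nabla Q\|_{L^2(\R^N)}^2+\tfrac{N}{2}\|Q\|_{L^2(\R^N)}^2=\tfrac{N-b}{2\sigma+2}\int_{\R^N}|x|^{-b}|Q|^{2\sigma+2}\,dx$. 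Solving this $2\times 2$ linear system for the ratios $\|\nabla Q\|_{L^2(\R^N)}^2/\|Q\|_{L^2(\R^N)}^2$ and $\int_{\R^N}|x|^{-b}|Q|^{2\sigma+2}\,dx/\|Q\|_{L^2(\R^N)}^2$ yields \eqref{groundR1} and \eqref{groundR2}; substituting these back into $J(Q)$ collapses all the $\|Q\|_{L^2(\R^N)}$ powers to $\|Q\|_{L^2(\R^N)}^{2\sigma}$ and produces the stated value \eqref{opt1} for $K_{\rm opt}=1/J(Q)$.

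The main obstacle is the existence step: because the weight $|x|^{-b}$ is singular at the origin and destroys translation invariance, one cannot invoke the usual translation-based concentration-compactness, and the compactness of the radial embedding into $L^{2\sigma+2}(|x|^{-b}\,dx)$ must be controlled with care near both $x=0$ and $x=\infty$; the subcriticality hypotheses on $\sigma$ are exactly what make this compactness hold. The remaining ingredients---the scale invariance of $J$, the Euler--Lagrange computation, and the two Pohozaev identities---are routine once a minimizer is in hand.
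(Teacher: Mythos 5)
Your proposal is correct, and its first half --- the Weinstein functional $J$, scale invariance, Schwarz symmetrization, existence of a minimizer, Euler--Lagrange equation, and rescaling of the minimizer to a solution of \eqref{ground} identified with $Q$ by uniqueness --- is essentially the paper's argument; the only difference there is that the paper handles compactness by citing Genoud's Lemma 2.1 (weak sequential continuity of $I(u)=\int_{\R^N}|x|^{-b}|u|^{2\sigma+2}dx$ on all of $H^1(\R^N)$, the decaying weight removing translation invariance), while you invoke the compact embedding of radial $H^1$ into $L^{2\sigma+2}(|x|^{-b}dx)$ after symmetrization --- both are legitimate in the stated range of $\sigma$ and $b$. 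Where you genuinely diverge is in obtaining \eqref{groundR1}--\eqref{groundR2} and the constant \eqref{opt1}: the paper never uses a Pohozaev identity. It instead keeps the normalization $\|v^{\ast}\|_{L^2(\R^N)}=\|\nabla v^{\ast}\|_{L^2(\R^N)}=1$ of the minimizer, sets $\psi^{\ast}=[m(\sigma+1)]^{-1/2\sigma}v^{\ast}$ and $Q=\lambda\psi^{\ast}(\mu\,\cdot)$ with explicit $\lambda,\mu$, reads off \eqref{groundR1} from the scaling relation $\|\nabla Q\|_{L^2(\R^N)}=\mu\|Q\|_{L^2(\R^N)}$, gets \eqref{groundR2} by pairing \eqref{ground} with $Q$, and converts $K_{\rm opt}=1/m=(\sigma+1)/\|\psi^{\ast}\|_{L^2(\R^N)}^{2\sigma}$ into \eqref{opt1} through $\|Q\|_{L^2(\R^N)}=\lambda\mu^{-N/2}\|\psi^{\ast}\|_{L^2(\R^N)}$. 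Your route --- pairing with $Q$ together with the Pohozaev identity $\tfrac{N-2}{2}\|\nabla Q\|_{L^2(\R^N)}^2+\tfrac{N}{2}\|Q\|_{L^2(\R^N)}^2=\tfrac{N-b}{2\sigma+2}\int_{\R^N}|x|^{-b}|Q|^{2\sigma+2}dx$, then solving the resulting $2\times 2$ system and evaluating $K_{\rm opt}=1/J(Q)$ by scale invariance --- is also valid (I checked that the system gives exactly $\|\nabla Q\|_{L^2(\R^N)}^2=\tfrac{N\sigma+b}{2\sigma+2-(N\sigma+b)}\|Q\|_{L^2(\R^N)}^2$, then \eqref{groundR2}, and then \eqref{opt1}). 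It buys something the paper's argument does not: the identities \eqref{groundR1}--\eqref{groundR2} are established for any $H^1$ solution of \eqref{ground}, independently of the variational construction, and no bookkeeping of the rescaling constants $\lambda,\mu$ is needed. The price is that the Pohozaev identity must be justified (multiplication by $x\cdot\nabla Q$ uses the standard decay and regularity of $Q$, plus $x\cdot\nabla(|x|^{-b})=-b|x|^{-b}$), which is routine but should be stated rather than assumed.
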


\begin{remark}
The existence and uniqueness of the ground state solution $Q$ for equation \eqref{ground} was proved by Toland \cite{To84} and Yanagida \cite{Ya91} (see also Genoud and Stuart \cite{GS08}). These results hold under the assumptions $0<b<\min\{2,N\}$ and $0<\sigma<2^{\ast}$.
\end{remark}

\begin{remark}
A similar sharp Gagliardo-Nirenberg estimate was also obtained by Chen and Guo \cite{CG07}, for radial symmetric functions in $H^1(\mathbb{R}^N)$, in the case $b<0$ and space dimension $N\geq 2$.
\end{remark}

Next we state our main global well-posedness result.

\begin{theorem}\label{global3}
Let $\frac{2-b}{N}<\sigma<\frac{2-b}{N-2}$, $0<b<\min\{2,N\}$ and set $s_{\sigma}=\frac{N}{2}-\frac{2-b}{2\sigma}$. Suppose that $u(t)$ is the solution of \eqref{INLS} with initial data $u_0\in H^1(\R^N)$ satisfying  
\begin{equation}\label{GR1}
E[u_0]^{s_{\sigma}} M[u_0]^{1-s_{\sigma}} < E[Q]^{s_{\sigma}} M[Q]^{1-s_{\sigma}} 
\end{equation}
and
\begin{equation}\label{GR2}
\|\nabla u_0\|_{L^2(\R^N)}^{s_{\sigma}}\|u_0\|_{L^2(\R^N)}^{1-s_{\sigma}} < \|\nabla Q\|_{L^2(\R^N)}^{s_{\sigma}}\|Q\|_{L^2(\R^N)}^{1-s_{\sigma}},
\end{equation}
then $u(t)$ is a global solution in $H^1(\R^N)$.

Moreover, for any $t\in \R$  we have
\begin{equation}\label{GR3}
\|\nabla u(t)\|_{L^2(\R^N)}^{s_{\sigma}}\|u(t)\|_{L^2(\R^N)}^{1-s_{\sigma}}
< \|\nabla Q\|_{L^2(\R^N)}^{s_{\sigma}}\|Q\|_{L^2(\R^N)}^{1-s_{\sigma}},
\end{equation}
where $Q$ is unique positive even solution of the elliptic equation \eqref{ground}.
\end{theorem}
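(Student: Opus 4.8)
The plan is to adapt the variational ``trapping'' argument of Weinstein and Holmer--Roudenko to the inhomogeneous setting, using the sharp Gagliardo--Nirenberg inequality of Theorem \ref{best} as the only analytic input. First I would reduce everything to a single real variable. Inserting \eqref{GN} into the energy \eqref{EC} gives
\begin{equation*}
E[u(t)] \ge \frac{1}{2}\|\nabla u(t)\|_{L^2}^2 - \frac{K_{\rm opt}}{2\sigma+2}\|\nabla u(t)\|_{L^2}^{N\sigma+b}\|u(t)\|_{L^2}^{2\sigma+2-(N\sigma+b)}.
\end{equation*}
Multiplying by $M[u(t)]^{(1-s_\sigma)/s_\sigma}$ and recording the exponent identities $N\sigma+b=2(1+\sigma s_\sigma)$ and $2\sigma+2-(N\sigma+b)=2\sigma(1-s_\sigma)$ (both immediate from the definition of $s_\sigma$), the right-hand side collapses into a function of the single scale-invariant quantity $f(t):=\|\nabla u(t)\|_{L^2}^{s_\sigma}\|u(t)\|_{L^2}^{1-s_\sigma}$; explicitly,
\begin{equation*}
E[u(t)]\,M[u(t)]^{\frac{1-s_\sigma}{s_\sigma}} \ge g(f(t)), \qquad g(x):=\frac{1}{2}x^{2/s_\sigma}-\frac{K_{\rm opt}}{2\sigma+2}\,x^{2(1+\sigma s_\sigma)/s_\sigma}.
\end{equation*}
Since both $M$ and $E$ are conserved, the left-hand side equals the constant $E[u_0]M[u_0]^{(1-s_\sigma)/s_\sigma}$.

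Next I would study the elementary function $g$. Writing $g(x)=\tfrac12 x^{p}-c\,x^{q}$ with $0<p=2/s_\sigma<q=p+2\sigma$ and $c>0$, it vanishes at the origin, increases to a unique positive maximum, and then decreases to $-\infty$; its only positive critical point is $x_\ast=(p/2cq)^{1/(q-p)}$. The crux of this step is to check that $x_\ast$ coincides with $f_Q:=\|\nabla Q\|_{L^2}^{s_\sigma}\|Q\|_{L^2}^{1-s_\sigma}$ and that $g(f_Q)=E[Q]M[Q]^{(1-s_\sigma)/s_\sigma}$. Both identities follow by substituting the explicit constant \eqref{opt1} together with the Pohozaev-type relations \eqref{groundR1} and \eqref{groundR2}; in particular \eqref{groundR1} gives $f_Q=\bigl(\tfrac{N\sigma+b}{2\sigma(1-s_\sigma)}\bigr)^{s_\sigma/2}\|Q\|_{L^2}$, and a short computation then matches $f_Q^{2\sigma}$ with $(2\sigma+2)/(K_{\rm opt}(N\sigma+b))=p/2cq$. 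Equivalently, since $Q$ realizes equality in \eqref{GN}, the bound $E[u]M[u]^{(1-s_\sigma)/s_\sigma}\ge g(f)$ is saturated at $u=Q$, which forces $g(f_Q)$ to be the maximal value $g(x_\ast)$. Thus $\max_{x\ge0}g=g(f_Q)=E[Q]M[Q]^{(1-s_\sigma)/s_\sigma}$.

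With the profile of $g$ understood, the global bound \eqref{GR3} comes from a continuity (trapping) argument. Hypothesis \eqref{GR2} reads $f(0)<f_Q$; since $g>0$ on $(0,f_Q]$, the inequality $E[u_0]M[u_0]^{(1-s_\sigma)/s_\sigma}\ge g(f(0))>0$ shows $E[u_0]>0$, so raising \eqref{GR1} to the power $1/s_\sigma$ is legitimate and yields $E[u_0]M[u_0]^{(1-s_\sigma)/s_\sigma}<E[Q]M[Q]^{(1-s_\sigma)/s_\sigma}=\max g$. Hence for every $t$ in the maximal existence interval
\begin{equation*}
g(f(t)) \le E[u_0]M[u_0]^{\frac{1-s_\sigma}{s_\sigma}} < \max_{x\ge 0} g = g(f_Q).
\end{equation*}
In particular $f(t)\neq f_Q$ for all such $t$, since $g(f_Q)$ is the strict maximum. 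As $t\mapsto f(t)$ is continuous (the $H^1$ flow is continuous in time) and starts at $f(0)<f_Q$, the intermediate value theorem forbids it from ever exceeding $f_Q$; therefore $f(t)<f_Q$ for all $t$, which is exactly \eqref{GR3}.

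Finally, global existence is immediate from \eqref{GR3} and mass conservation: since $\|u(t)\|_{L^2}=\|u_0\|_{L^2}$ is constant, the uniform bound $f(t)<f_Q$ forces $\|\nabla u(t)\|_{L^2}$ to be uniformly bounded on the maximal interval, whence $\sup_t\|u(t)\|_{H^1}<\infty$. By the local well-posedness theory of Genoud and Stuart \cite{GS08} and its blow-up alternative, the solution cannot cease to exist while its $H^1$ norm stays bounded, so it extends to all of $\R$. I expect the only genuinely delicate point to be the second step, namely verifying algebraically that the maximizer of $g$ and its maximal value are precisely the ground-state quantities; this rests entirely on the bookkeeping of the exponents and on the explicit form \eqref{opt1} of $K_{\rm opt}$, while everything else is the standard conservation-law-plus-continuity scheme.
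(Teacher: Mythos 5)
Your proof is correct and follows essentially the same route as the paper: the sharp Gagliardo--Nirenberg inequality plus conservation of mass and energy reduce everything to a one-variable function whose unique positive maximum is pinned (via \eqref{opt1}, \eqref{groundR1}, \eqref{groundR2}) at the ground-state quantities, and a continuity/trapping argument then yields \eqref{GR3} and hence the uniform $H^1$ bound and global existence. The only cosmetic difference is that you run the argument on the scale-invariant quantity $\|\nabla u(t)\|_{L^2}^{s_\sigma}\|u(t)\|_{L^2}^{1-s_\sigma}$, whereas the paper works with $X(t)=\|\nabla u(t)\|_{L^2}^{2}$ and absorbs the mass-dependence into the coefficient $B$ of its comparison function.
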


This theorem can be viewed as an unified global theory result for both INLS and NLS models. Indeed, if $b=0$ we deduce Holmer and Roudenko's result \cite{HR07}, if $\sigma=(2-b)/N$ this is Genoud's result \cite{G2012} and, finally, if $b=0$ and $\sigma=2/N$ we obtain the classical global well-posedness theorem proved by Weinstein\cite{W83}.

The second part of this work is devoted to find blow-up solutions for the INLS equation \eqref{INLS}. Assuming finite variance of the initial data, i.e. $u_0\in H^1(\R^N)\cap \{u: |x|u \in L^2(\R^N)\}$, we prove the following result. 

\begin{theorem}\label{blowup}
Let $\frac{2-b}{N}<\sigma<\frac{2-b}{N-2}$, $0<b<\min\{2,N\}$ and set $s_{\sigma}=\frac{N}{2}-\frac{2-b}{2\sigma}$. Suppose that $u(t)$ is the solution of \eqref{INLS} with initial data 
$$
u_0\in H^1(\R^N)\cap \{u: |x|u \in L^2(\R^N)\}
$$ 
satisfying  
\begin{equation}\label{BR1}
E[u_0]^{s_{\sigma}} M[u_0]^{1-s_{\sigma}} < E[Q]^{s_{\sigma}} M[Q]^{1-s_{\sigma}}
\end{equation}
and
\begin{equation}\label{BR2}
\|\nabla u_0\|_{L^2(\R^N)}^{s_{\sigma}}\|u_0\|_{L^2(\R^N)}^{1-s_{\sigma}} > \|\nabla Q\|_{L^2(\R^N)}^{s_{\sigma}}\|Q\|_{L^2(\R^N)}^{1-s_{\sigma}},
\end{equation}
then the maximum existence time is finite and blow-up in $H^1(\R^N)$ must occur.
\end{theorem}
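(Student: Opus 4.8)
The plan is to run a convexity (virial) argument on the variance $V(t)=\int_{\R^N}|x|^2|u(x,t)|^2\,dx$, which remains finite throughout the existence interval because the condition $|x|u_0\in L^2(\R^N)$ is propagated by the flow. The first step is to establish the virial identity. Differentiating $V$ twice in time and integrating by parts (the potential term is the only place where the weight $|x|^{-b}$ enters) I expect to obtain
\begin{equation*}
V''(t)=8\|\nabla u(t)\|_{L^2(\R^N)}^2-\frac{4(N\sigma+b)}{\sigma+1}\int_{\R^N}|x|^{-b}|u(x,t)|^{2\sigma+2}\,dx.
\end{equation*}
Eliminating the potential term by conservation of the energy \eqref{EC} rewrites this as
\begin{equation*}
V''(t)=\bigl(8-4(N\sigma+b)\bigr)\|\nabla u(t)\|_{L^2(\R^N)}^2+8(N\sigma+b)\,E[u_0].
\end{equation*}
In the supercritical range $\sigma>(2-b)/N$ we have $N\sigma+b>2$, so the coefficient of the gradient term is strictly negative; the whole problem therefore reduces to bounding $\|\nabla u(t)\|_{L^2}$ uniformly from below.

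Second, I would reuse the trapping mechanism underlying Theorem \ref{global3}, now on the opposite side. Combining the sharp Gagliardo-Nirenberg inequality \eqref{GN} from Theorem \ref{best} with conservation of mass and energy yields $E[u_0]\ge f\bigl(\|\nabla u(t)\|_{L^2}\bigr)$, where $f(x)=\tfrac12 x^2-\tfrac{K_{\rm opt}}{2\sigma+2}\|u_0\|_{L^2}^{2\sigma+2-(N\sigma+b)}\,x^{N\sigma+b}$. Since $N\sigma+b>2$, the function $f$ has a unique positive maximum at some $x_0$, and inserting the explicit constant $K_{\rm opt}$ from \eqref{opt1} together with the ground-state relations \eqref{groundR1}-\eqref{groundR2} one checks that $x_0^{s_\sigma}\|u_0\|_{L^2}^{1-s_\sigma}=\|\nabla Q\|_{L^2}^{s_\sigma}\|Q\|_{L^2}^{1-s_\sigma}$ and $f(x_0)^{s_\sigma}M[u_0]^{1-s_\sigma}=E[Q]^{s_\sigma}M[Q]^{1-s_\sigma}$. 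Hence the hypotheses \eqref{BR1}-\eqref{BR2} translate into $E[u_0]<f(x_0)$ and $\|\nabla u_0\|_{L^2}>x_0$. Because $t\mapsto\|\nabla u(t)\|_{L^2}$ is continuous and $E[u_0]\ge f(\|\nabla u(t)\|_{L^2})$ can never attain the maximal value $f(x_0)$, the gradient cannot cross $x_0$; starting above it, it stays above. Monotonicity of $f$ on $(x_0,\infty)$ upgrades this to a uniform gap: there is $x_1>x_0$ with $f(x_1)=E[u_0]$ and $\|\nabla u(t)\|_{L^2}\ge x_1$ for all $t$ in the existence interval.

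Third, I combine the two ingredients. Using $f'(x_0)=0$ one obtains the algebraic identity $8(N\sigma+b)f(x_0)=4(N\sigma+b-2)x_0^2$, so the strict inequality $E[u_0]<f(x_0)$ gives
\begin{equation*}
V''(t)<4(N\sigma+b-2)\bigl(x_0^2-\|\nabla u(t)\|_{L^2(\R^N)}^2\bigr)\le 4(N\sigma+b-2)\,(x_0^2-x_1^2)=:-c<0
\end{equation*}
for every $t$ in the existence interval. Finally, since $V(t)\ge0$ while $V''(t)\le-c<0$, the quadratic upper bound $V(t)\le V(0)+V'(0)\,t-\tfrac{c}{2}t^2$ forces $V$ to become negative in finite time, which is impossible. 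Therefore the maximal existence time is finite, and by the $H^1$ blow-up alternative of the local well-posedness theory one concludes $\|\nabla u(t)\|_{L^2}\to\infty$, i.e. blow-up in $H^1(\R^N)$ occurs.

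I expect the principal difficulty to be the rigorous justification of the virial identity rather than the algebra. The weight $|x|^{-b}$ is singular at the origin, so the formal integrations by parts must be legitimized either by truncating $|x|^2$ and passing to the limit, or by working first with smooth, rapidly decaying approximate solutions and invoking the continuous dependence from the local theory; in either case one has to verify that the contributions of the weight produce exactly the factor $N\sigma+b$ (and not merely $N\sigma$) and that no boundary term survives as the truncation radius shrinks to the origin. A secondary point worth isolating, done once in the proof of Theorem \ref{global3}, is the identification of $x_0$ and $f(x_0)$ with the ground-state thresholds via the explicit value of $K_{\rm opt}$, which is what makes the scale-invariant hypotheses \eqref{BR1}-\eqref{BR2} exactly the right ones.
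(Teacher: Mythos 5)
Your proof is correct, and its skeleton is the same as the paper's: the virial identity \eqref{VE3} (the paper's Proposition \ref{PropMerle}, proved by exactly the integration by parts you sketch, with $x\cdot\nabla(|x|^{-b})=-b|x|^{-b}$ supplying the $N\sigma+b$ factor), a uniform lower bound on $\|\nabla u(t)\|_{L^2}$ strictly above the ground-state threshold, and the resulting concavity contradiction for the variance. Where you genuinely diverge is in the mechanism producing the uniform gap. The paper works with scale-invariant quantities and proves an explicit ``energy trapping'' statement, Proposition \ref{Parabola2}, whose proof relies on Lemma \ref{Parabola} --- a non-obvious calculus fact comparing $f$ with its tangent parabola at the maximum --- and which yields a closed-form constant $c_{\sigma,b,N,Q,u}>1$ measuring the gap in terms of the ratio of $E[u_0]M[u_0]^{s_\sigma/(1-s_\sigma)}$ to $E[Q]M[Q]^{s_\sigma/(1-s_\sigma)}$. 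You instead use only the strict monotonicity of $f$ on $(x_0,\infty)$: since $E[u_0]<f(x_0)$ and $f\to-\infty$, there is a unique $x_1>x_0$ with $f(x_1)=E[u_0]$, and $f(\|\nabla u(t)\|_{L^2})\le E[u_0]$ together with the continuity argument keeping $\|\nabla u(t)\|_{L^2}>x_0$ forces $\|\nabla u(t)\|_{L^2}\ge x_1$ for all time in the existence interval. This is shorter and avoids Lemma \ref{Parabola} entirely; the price is that your gap $x_1$ is only implicit, whereas the paper's trapping proposition (in the Kenig--Merle spirit) is an explicit quantitative statement of independent interest, with part (a) isolating the negative-energy case. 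Your algebra is sound: $f'(x_0)=0$ gives $f(x_0)=\frac{N\sigma+b-2}{2(N\sigma+b)}\,x_0^2$, which is precisely the identity $8(N\sigma+b)f(x_0)=4(N\sigma+b-2)x_0^2$ you invoke, and the identification of $(x_0,f(x_0))$ with the ground-state thresholds via $K_{\rm opt}$, \eqref{groundR1} and \eqref{Eground} is the same computation the paper carries out in the proof of Theorem \ref{global3}. One small point worth making explicit: when $E[u_0]\le 0$ the hypothesis \eqref{BR1} should be read as automatically satisfied (as the paper remarks), and your argument still runs, since then trivially $E[u_0]<f(x_0)$ and $x_1$ remains well defined.
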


\begin{remark}
The above Theorem shows that condition \eqref{GR2} is sharp for global existence except for the threshold level $\|\nabla u_0\|_{L^2(\R^N)}^{s_{\sigma}}\|u_0\|_{L^2(\R^N)}^{1-s_{\sigma}} = \|\nabla Q\|_{L^2(\R^N)}^{s_{\sigma}}\|Q\|_{L^2(\R^N)}^{1-s_{\sigma}}$. For the $L^2$-critical INLS equation ($\sigma=(2-b)/N$) threshold solutions was also studied by Genoud \cite{G2012}.  He proved the existence of critical mass blow-up solutions ($\|u_0\|_{L^2(\R^N)}=\|Q\|_{L^2(\R^N)}$), using a pseudo-conformal transformation. It is an interesting open problem to obtain a similar result for the $L^2$-supercritical and $H^1$-subcritical NLS and INLS equations.

\end{remark}

\begin{remark}
We point out that if the initial data has negative energy, then via the sharp Gagliardo-Nirenberg inequality we have \eqref{BR2} (see Proposition \ref{Parabola2}). Also note that we can compute the energy \eqref{EC} of the ground state solution $Q$ in terms of its $L^2$ and $\dot{H}^1$ norms. Indeed, by relations \eqref{groundR1} and \eqref{groundR2} a simple computation shows
\begin{equation}\label{Eground}
E[Q]=\dfrac{N\sigma+b-2}{2(2\sigma+2-(N\sigma+b))}\| Q\|^2_{L^2(\R^N)}= \dfrac{N\sigma+b-2}{2(N\sigma+b)}\|\nabla Q\|^2_{L^2(\R^N)}.
\end{equation}

Since $\sigma>(2-b)/N$ it is clear that $E[Q]>0$. Therefore, for initial data with negative energy, we automatically have \eqref{BR1} and the conclusion of Theorem \ref{blowup} also holds in this case.

The identities \eqref{Eground} will be also useful in the proof of our main global well-posedness result Theorem \ref{global3}.
\end{remark}

After this work was completed, we have learned that recently Zhu \cite[Theorem 4.1]{Zhu14} have reached similar global well-posedness and blow-up results for radial symmetric initial data in $H^1(\mathbb{R}^N)$ in the case $b<0$ and space dimension $N\geq 3$. However, the threshold obtained by Zhu does not depend directly on the solutions of the equation \eqref{ground}. Thus, it is not clear how to deduce Theorem \ref{HR} directly from his results.

The plan of this paper is as follows. In the next section we introduce some notation and show the sharp Gagliardo-Nirenberg inequality \eqref{GN}. Next, in section \ref{global}, we prove our global existence result stated in Theorem \ref{global3}. In Section \ref{Virial}, we prove some Virial type identities for solutions of equation \eqref{INLS}. Finally, Section \ref{blowup2} is devoted to our blow-up result stated in Theorem \ref{blowup}.


\section{The sharp Gagliardo-Nirenberg inequality}\label{sharpGN}

Let us start this section by introducing the notation used throughout the paper. We use $\|\cdot\|_{L^p(\R^N)}$ to denote the $L^p(\R^N)$ norm with $p\geq 1$. If necessary, we use subscript to inform which variable we are concerned with.

The spatial Fourier transform of a function $f(x)$ is given by
\begin{equation*}
\hat{f}(\xi)=\int_{\R^N}e^{-ix\cdot \xi}f(x)dx.
\end{equation*}

We shall also define $D^s$ and $J^s$ to be, respectively, the Fourier
multiplier with symbol $|\xi|^s$ and $\langle \xi \rangle^s = (1+|\xi|)^s$.
In this case, the norm in the Sobolev spaces $H^s(\R^N)$ and $\dot{H}^s(\R^N)$
are given, respectively, by
\begin{equation*}
\|f\|_{H^s(\R^N)}\equiv \|J^sf\|_{L^2_x(\R^N)}=\|\langle \xi
\rangle^s\hat{f}\|_{L^2_{\xi}(\R^N)} 
\end{equation*}
and
\begin{equation*}
\|f\|_{\dot{H}^s(\R^N)}\equiv
\|D^sf\|_{L^2_x(\R^N)}=\||\xi|^s\hat{f}\|_{L^2_{\xi}(\R^N)}.
\end{equation*}

Now, we prove the sharp Gagliardo-Nirenberg inequality \eqref{GN}.

\begin{proof}[Proof of Theorem \ref{best}]
We follow the ideas introduced by Weinstein \cite{W83}. First, define the Weinstein functional 
$$
J(u)=\dfrac{\|\nabla u\|_{L^2(\R^N)}^{N\sigma +b}\|u\|_{L^2(\R^N)}^{2\sigma+ 2-(N\sigma +b)}}{I(u)},
$$
where
$$
I(u)=\int_{\R^N} |x|^{-b}|u(x)|^{2\sigma+2}\;dx.
$$
It was proved in Genoud \cite{G2012} (see Lemma 2.1) that $I\in C(H^1(\R^N);\R)$ and is weakly sequentially continuous and $J\in C(H^1(\R^N) \backslash \{0\};\R)$. 

Now, since $J(u)\geq 0$, there exists a minimizing sequence $u_n \in H^1(\R^N)$ such that
$$
\lim_{n\rightarrow \infty}J(u)=m.
$$

By Schwarz symmetrization, we can assume that $u_n$ is radial and radially non-increasing for all $n$. Next, we rescale the sequence $\{u_n\}_{n\in \mathbb{N}}$ by setting $v_n(x)=\lambda_nu_n(\mu_n x)$ where
$$
\lambda_n=\dfrac{\|u_n\|^{N/2-1}_{L^2(\R^N)}}{\|\nabla u_n\|^{N/2}_{L^2(\R^N)}} \peq \textrm{ and } \peq \mu_n=\dfrac{\|u_n\|_{L^2(\R^N)}}{\|\nabla u_n\|_{L^2(\R^N)}} 
$$
so that $\|v_n\|_{L^2(\R^N)}=\|\nabla v_n\|_{L^2(\R^N)}=1$. Moreover, since $J$ is invariant under this scaling, $\{v_n\}_{n\in \mathbb{N}}$ is also a minimizing sequence which is bounded in $H^1(\R^N)$. Therefore, there exists $v^{\ast} \in H^1(\R^N)$ such that, up to a subsequence, $v_n \rightharpoonup v^{\ast}$ weakly in $H^1(\R^N)$. Furthermore, $v^{\ast}$ is non-negative, spherically symmetric, radially non-increasing, with
$$
\|v^{\ast}\|_{L^2(\R^N)}\leq 1 \peq \textrm{ and } \peq \|\nabla v^{\ast}\|_{L^2(\R^N)}\leq 1.
$$

In this case,
$$
m\leq J(v^{\ast}) \leq \dfrac{1}{I(v^{\ast})}=\lim_{n\rightarrow \infty} J(v_n)=m.
$$
Thus, $J(v^{\ast})=\dfrac{1}{I(v^{\ast})}=m$ and $\|v^{\ast}\|_{L^2(\R^N)}=\|\nabla v^{\ast}\|_{L^2(\R^N)}=1$. In particular $v^{\ast} \neq 0$ and $v_n \rightarrow v^{\ast}$ strongly in $H^1(\R^N)$. 

Therefore, $v^{\ast}$ is a minimizer for the Weinstein operator $J$. Moreover, $v^{\ast}$ is a solution of the Euler-Lagrange equation
$$
\dfrac{d}{d\varepsilon}J(v^{\ast} + \varepsilon \eta)|_{\varepsilon=0}=0,\peq \textrm{ for all } \eta \in C^{\infty}_0(\R^N)
$$
and so we obtain that $v^{\ast}$ satifies the equation
$$
\dfrac{N\sigma+2}{2}\Delta v^{\ast} - \dfrac{2\sigma+2-(N\sigma+b)}{2}v^{\ast}+m(\sigma+1)|v^{\ast}|^{2\sigma}v^{\ast}=0.
$$
Next, we rescale $v^{\ast}$ to a solution of equation \eqref{ground}. First, we take $\psi^{\ast}=[m(\sigma+1)]^{-1/2\sigma}v^{\ast}$. It is easy to see that $\psi^{\ast}$ is a solution of 
$$
\dfrac{N\sigma+2}{2}\Delta \psi^{\ast}- \dfrac{2\sigma+2-(N\sigma+b)}{2}\psi^{\ast}+|\psi^{\ast}|^{2\sigma}\psi^{\ast}=0.
$$
Furthermore, since $\|v^{\ast}\|_{L^2(\R^N)}=\|\nabla v^{\ast}\|_{L^2(\R^N)}=1$ we have
$$
m=\dfrac{\|\psi^{\ast}\|^{2\sigma}_{L^2(\R^N)}}{\sigma+1}=\dfrac{\|\nabla \psi^{\ast}\|^{2\sigma}_{L^2(\R^N)}}{\sigma+1} \peq \textrm{ and } \peq  \|\psi^{\ast}\|_{L^2(\R^N)}=\|\nabla \psi^{\ast}\|_{L^2(\R^N)}.
$$
Now set $Q(x)=\lambda\psi^{\ast}(\mu x)$, where 
$$
\lambda=\left[\left(\dfrac{N\sigma+b}{2\sigma+2-(N\sigma+b)}\right)^{\frac{2-b}{2}} \dfrac{2}{N\sigma+b}\right]^{1/2\sigma}\peq \textrm{ and } \peq \mu=\left(\dfrac{N\sigma+b}{2\sigma+2-(N\sigma+b)}\right)^{1/2} 
$$
so $Q$ is a solution of \eqref{ground} and 
\begin{equation}\label{groundR3}
\|Q\|_{L^2(\R^N)}=\dfrac{\lambda}{\mu^{N/2}}\|\psi^{\ast}\|_{L^2(\R^N)}.
\end{equation}

By the definition of $m$ and relation \eqref{groundR3} we have
$$
K_{\rm opt}= \dfrac{1}{m}=\dfrac{\sigma+1}{\|\psi^{\ast}\|^{2\sigma}_{L^2(\R^N)}} = \left(\frac{N\sigma+b}{2\sigma+2-(N\sigma+b)}\right)^{\frac{2-(N\sigma+b)}{2}} \frac{2\sigma+2}{(N\sigma+b)\|Q\|_{L^2}^{2\sigma}},
$$
which implies \eqref{opt1}.

To finish the proof we need to show the relations \eqref{groundR1} and \eqref{groundR2}. Indeed, the definition of $Q$ yields
\begin{equation}\label{groundR4}
\|\nabla Q\|_{L^2(\R^N)}=\dfrac{\lambda}{\mu^{\frac{N-2}{2}}}\|\nabla \psi^{\ast}\|_{L^2(\R^N)}.
\end{equation}

Moreover, since $\|\psi^{\ast}\|_{L^2(\R^N)}=\|\nabla \psi^{\ast}\|_{L^2(\R^N)}$ we obtain
$$
\|\nabla Q\|_{L^2(\R^N)}=\mu \| Q\|_{L^2(\R^N)},
$$
which implies \eqref{groundR1}.

On the other hand, by multiplying \eqref{ground} by $Q$ and integrating by parts we have
$$
\int_{\R^N} |x|^{-b}|Q(x)|^{2\sigma+2}\;dx=\|\nabla Q\|^2_{L^2(\R^N)}+ \| Q\|^2_{L^2(\R^N)}
$$
and using \eqref{groundR1} we conclude \eqref{groundR2}.

\end{proof}


\section{Global Well-posedness}\label{global}

In this section we prove our main global well-posedness result.
 
\begin{proof}[Proof of Theorem \ref{global3}]
By the local theory, we just need to control the ${H}^1(\R^N)$ norm of $u(t)$ for all $t\in \R$. Using the quantities $M[u(t)]$ and $E[u(t)]$ and the sharp Gagliardo-Nirenberg inequality \eqref{GN} we have
\begin{equation}\label{ap10}
\begin{split}
2E[u_0] &= \|\nabla u(t)\|_{L^2(\R^N)}^2-\frac{1}{\sigma+1} \int_{\R^N} |x|^{-b}|u(x,t)|^{2\sigma+2}\;dx\\
&\geq \|\nabla u(t)\|_{L^2(\R^N)}^2-\frac{K_{\rm opt}}{\sigma+1}\,
\|u_0\|_{L^2(\R^N)}^{2\sigma+2-(N\sigma+b)}\|\nabla u(t)\|_{L^2(\R^N)}^{N\sigma+b}.
\end{split}
\end{equation}
Let $X(t)=\|\nabla u(t)\|_{L^2(\R^N)}^2$, $A=2E[u_0]$,
and $B=\frac{K_{\rm opt}}{\sigma+1}\|u_0\|_{L^2(\R^N)}^{2\sigma+2-(N\sigma+b)}$, then we can write \eqref{ap10} as
\begin{equation}\label{ap12}
X(t)-B\,X(t)^{\frac{N\sigma+b}{2}}\le A, \text{\hskip2pt for}\;\;t\in (0,T),
\end{equation}
where $T$ is the maximum time of existence given by the local theory.

Define the function $f(x)=x-B\,x^{\frac{N\sigma+b}{2}}$, for $x\ge 0$. Since $\sigma>(2-b)/N$ we have $deg(f)>1$. Moreover, a simple computation shows that $f$ has a local maximum at 
$$
x_0=\Big(\dfrac{2}{B(N\sigma+b)}\Big)^{2/N\sigma+b-2}
$$
with maximum value 
$$
f(x_0)=\dfrac{N\sigma+b-2}{N\sigma+b}\Big(\dfrac{2}{B(N\sigma+b)}\Big)^{2/N\sigma+b-2}.
$$

Using the relation \eqref{Eground}, the condition \eqref{GR1} implies that $2E[u_0] < f(x_0)$ which combining with \eqref{ap10} yields
\begin{equation}\label{ap13}
f(\|\nabla u(t)\|_{L^2(\R^N)}^2)\leq 2E(u_0) < f(x_0).
\end{equation}

Next, note that condition \eqref{GR2} is equivalent to $\|\nabla u(t)\|_{L^2(\R^N)}^2<x_0$. If initially it holds, then the continuity of $\|\nabla u(t)\|_{L^2(\R^N)}^2$ and \eqref{ap13} imply that 
$$
\|\nabla u(t)\|_{L^2(\R^N)}^2 < x_0
$$ 
for any t as long as the solution exists, which gives (\ref{GR3}). By mass conservation, we thus proved that the  ${H}^1(\R^N)$ norm of the solution $u(t)$ is bounded, which completes the proof of Theorem \ref{global3}.
\end{proof}


\section{Virial type identities}\label{Virial}

This section is devoted to establish some Virial type identities for the solutions of equation \eqref{INLS}. Throughout the rest of the paper we assume that $u\in H^1(\R^N)\cap \{u: |x|u \in L^2(\R^N)\}$. We have the following result.

\begin{proposition}\label{PropMerle}
Let $u(x,t)$ be a solution of equation \eqref{INLS} and $T$ its maximum existence time. We have for all $t\in [0,T)$
\begin{equation}\label{VE1}
\frac{d}{dt}\int_{\R^N}|x|^2|u(x,t)|^2dx=4\textrm{Im}\int_{\R^N}\bar{u}(x,t)(\nabla u(x,t)\cdot x) dx
\end{equation}
and 

\begin{equation}\label{VE2}
\frac{d^2}{dt^2}\int_{\R^N}|x|^2|u(x,t)|^2dx=4\left[2\int_{\R^N}|\nabla u(x,t)|^2dx +\left(\dfrac{N-b}{\sigma+1}-N\right)\int_{\R^N} |x|^{-b}|u(x,t)|^{2\sigma+2}\;dx\right].
\end{equation}
\end{proposition}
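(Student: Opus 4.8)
The plan is to prove both identities first for smooth solutions that decay sufficiently fast in space, where every differentiation under the integral sign and every integration by parts below is legitimate, and then to pass to the general class $u\in H^1(\R^N)\cap\{u:|x|u\in L^2(\R^N)\}$ by a standard regularization argument, using the continuity of the flow provided by the local theory. Since $0<b<\min\{2,N\}$, the weight $|x|^{-b}$ is locally integrable and the nonlinearity stays energy-subcritical, so the singularity at the origin produces no surviving boundary term in the limiting procedure.

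First I would prove \eqref{VE1}. Writing $\frac{d}{dt}\int_{\R^N}|x|^2|u|^2\,dx=2\,\textrm{Re}\int_{\R^N}|x|^2\bar u\,\partial_t u\,dx$ and substituting $\partial_t u=i(\Delta u+|x|^{-b}|u|^{2\sigma}u)$ from \eqref{INLS}, the relation $\textrm{Re}(iz)=-\textrm{Im}(z)$ converts this into $-2\,\textrm{Im}\int_{\R^N}|x|^2\bar u(\Delta u+|x|^{-b}|u|^{2\sigma}u)\,dx$. The potential term equals $|x|^{2-b}|u|^{2\sigma+2}$, which is real and hence drops out of the imaginary part. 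Integrating the Laplacian term by parts and using $\nabla|x|^2=2x$ (the resulting term $|x|^2|\nabla u|^2$ is again real) leaves precisely $4\,\textrm{Im}\int_{\R^N}\bar u\,(\nabla u\cdot x)\,dx$, which is \eqref{VE1}.

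For \eqref{VE2} I would differentiate \eqref{VE1}, that is, compute $\frac{d}{dt}P(t)$ with $P(t)=\textrm{Im}\int_{\R^N}\bar u\,(x\cdot\nabla u)\,dx$, again replacing $\partial_t u$ and $\partial_t\bar u$ by the equation and separating the Laplacian (linear) and potential (nonlinear) contributions. In the linear part the two resulting integrals combine, after repeated integration by parts, through the identity $\int_{\R^N}x\cdot\nabla|\nabla u|^2\,dx=-N\int_{\R^N}|\nabla u|^2\,dx$, and yield the $b$-independent term $2\int_{\R^N}|\nabla u|^2\,dx$; this is exactly the usual NLS virial computation. The nonlinear part is where the inhomogeneity is felt: after adding the two contributions and using $|u|^{2\sigma}\,x\cdot\nabla|u|^2=\frac{1}{\sigma+1}\,x\cdot\nabla(|u|^{2\sigma+2})$, one is left with $\frac{\sigma}{\sigma+1}\int_{\R^N}|x|^{-b}\,x\cdot\nabla(|u|^{2\sigma+2})\,dx$ together with a term $-b\int_{\R^N}|x|^{-b}|u|^{2\sigma+2}\,dx$ produced by $x\cdot\nabla(|x|^{-b})=-b|x|^{-b}$. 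Integrating the former by parts with $\nabla\cdot(|x|^{-b}x)=(N-b)|x|^{-b}$ and collecting, the nonlinear coefficient becomes $-\frac{\sigma(N-b)}{\sigma+1}-b=\frac{N-b}{\sigma+1}-N$; multiplying the whole expression for $P'(t)$ by $4$ then gives \eqref{VE2}.

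The routine steps, namely the linear virial cancellations, are standard; the delicate point, and the one I would check most carefully, is the nonlinear bookkeeping: correctly combining the two potential contributions, tracking the factor $-b$ coming from differentiating the weight, and justifying the integration by parts of $x\cdot\nabla(|x|^{-b}|u|^{2\sigma+2})$ against the singularity at $x=0$. The assumption $0<b<\min\{2,N\}$ is exactly what makes these manipulations admissible.
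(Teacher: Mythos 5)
Your proof is correct and follows essentially the same route as the paper's: the standard Merle-type virial computation, differentiating under the integral, substituting the equation for $\partial_t u$, and integrating by parts while tracking the inhomogeneous weight through $x\cdot\nabla(|x|^{-b})=-b|x|^{-b}$ and $\nabla\cdot(|x|^{-b}x)=(N-b)|x|^{-b}$. The differences are purely organizational (you obtain \eqref{VE1} by direct differentiation rather than via the local conservation law, and you regroup the nonlinear terms with one extra integration by parts), and your final coefficient identity $-\frac{\sigma(N-b)}{\sigma+1}-b=\frac{N-b}{\sigma+1}-N$ checks out.
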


\begin{remark}
In terms of $E[u]$ and $\int_{\R^N}|\nabla u(x,t)|^2dx$, relation \eqref{VE2} can be rewritten as
\begin{equation}\label{VE3}
\frac{d^2}{dt^2}\int_{\R^N}|x|^2|u(x,t)|^2dx=8(N\sigma+b)E[u_0]-4(N\sigma+b-2)\|\nabla u(t)\|_{L^2(\R^N)}^2.
\end{equation}
\end{remark}

\begin{remark}
For $b<0$, Chen and Guo \cite{CG07} also obtained Virial type identities for the INLS equation \eqref{INLS}. The idea of the proof is based on Merle \cite[Proposition 2.1]{M96}. For the sake of completeness we also give the proof of Proposition \ref{PropMerle} in our case below.
\end{remark}

\begin{proof}[Proof]
We follow closely the proof of Proposition 2.1 in Merle \cite{M96}. Multiplying the equation \eqref{INLS} by $2\bar{u}$ and taking the imaginary part we have
$$
\textrm{Im}(2i\partial_tu\bar{u})=-\textrm{Im}(2\Delta u \bar{u}).
$$
Note that  $\partial_t |u|^2=\textrm{Re}(2\partial_t u\bar{u})=\textrm{Im}(2i\partial_t u\bar{u})$, thus
$$
\partial_t |u|^2=-\textrm{Im}(2\Delta u \bar{u})=-\nabla \cdot (\textrm{Im}(\nabla u \bar{u})),
$$
where $\nabla \cdot f = \textrm{div}(f)=\partial f/\partial{x_1}+\cdots+\partial f/\partial{x_N}$.

Therefore
\begin{eqnarray*}
\frac{d}{dt}\int_{\R^N}|x|^2|u(x,t)|^2dx&=&-2\int_{\R^N} |x|^2\nabla \cdot (\textrm{Im}(\nabla u \bar{u}))dx\\
&=&4\int_{\R^N}\sum_{j=1}^N(x_j \cdot \textrm{Im}(\nabla u \bar{u}))dx\\
&=&4\textrm{Im}\int_{\R^N}\bar{u}(\nabla u \cdot x)dx\\
\end{eqnarray*}
which proves \eqref{VE1}.

On the other hand, taking time derivative in the previous relation, integrating by parts and using that $z-\bar{z}=2i\textrm{Im}(z)$ we obtain
\begin{equation}\label{VE4}
\begin{split}
\frac{d^2}{dt^2}\int_{\R^N}|x|^2|u(x,t)|^2dx&=4\textrm{Im}\left\{\int_{\R^N}(\bar{u}_t(\nabla u \cdot x)+\bar{u}(\nabla u_t \cdot x))dx\right\}\\
&=4\textrm{Im}\left\{\sum_{j=1}^N\int_{\R^N}\left(\bar{u}_t(\frac{\partial u}{\partial{x_j}} \cdot x_j)+\bar{u}(\frac{\partial u_t}{\partial{x_j}} \cdot x_j)\right)dx\right\}\\
&=4\textrm{Im}\left\{\sum_{j=1}^N\int_{\R^N}\left(\bar{u}_t(\frac{\partial u}{\partial{x_j}} \cdot x_j)-u_t (\frac{\partial \bar{u}}{\partial{x_j}} \cdot x_j)-\bar{u} u_t\right)dx\right\}\\
&=4\textrm{Im}\left\{2\int_{\R^N}\bar{u}_t(\nabla u \cdot x)dx- N\int_{\R^N} \bar{u} u_t dx\right\}.\\
\end{split}
\end{equation}

We study each term in the right hand side of \eqref{VE4} separately. For the second term, using the equation \eqref{INLS} and integrating by parts we have
\begin{equation}\label{VE5}
\begin{split}
-N\textrm{Im}\left\{\int_{\R^N} \bar{u} u_t dx\right\}
&=-N\textrm{Im}\left\{-i\int_{\R^N}|\nabla{u}|^2dx+i\int_{\R^N} |x|^{-b}|u|^{2\sigma+2} dx\right\}\\
&=N\int_{\R^N}|\nabla{u}|^2dx-N\int_{\R^N} |x|^{-b}|u|^{2\sigma+2} dx.\\
\end{split}
\end{equation}

Next, again using the equation \eqref{INLS}, the first term in the right hand side of \eqref{VE4} can be expressed as
\begin{equation}\label{VE6}
\begin{split}
2\textrm{Im}\left\{\int_{\R^N}\bar{u}_t(\nabla u \cdot x)dx\right\}
&=-2\textrm{Im}\left\{\int_{\R^N}\left[(i\Delta u\nabla\bar{u})\cdot x+(i |x|^{-b}|u|^{2\sigma}u)\nabla\bar{u}\cdot x\right] dx\right\}\\
&=-2\textrm{Re}\left\{\int_{\R^N}\left[\Delta u\nabla\bar{u}\cdot x+( |x|^{-b}|u|^{2\sigma}u)\nabla\bar{u}\cdot x\right] dx\right\}.\\
\end{split}
\end{equation}

Moreover, integration by parts yields	
\begin{equation*}
\begin{split}
\int_{\R^N}\Delta u\nabla\bar{u}\cdot x dx 
&=\sum_{j, k=1}^N \int_{\R^N}\frac{\partial^2 u}{\partial{x_k}^2} \left(\frac{\partial \bar{u}}{\partial{x_j}} x_j\right)dx\\
&=-\sum_{j, k=1}^N \int_{\R^N}\frac{\partial u}{\partial{x_k}} \frac{\partial^2 \bar{u}}{\partial{x_k} \partial{x_j}} x_jdx - \int_{\R^N}|\nabla u|^2dx.\\
\end{split}
\end{equation*}

Therefore, taking the real part and integrating by parts we obtain
\begin{equation}\label{VE7}
\begin{split}
\textrm{Re}\left\{\int_{\R^N}\Delta u\nabla\bar{u}\cdot x dx \right\}
&=-\frac{1}{2}\sum_{j, k=1}^N \int_{\R^N}\frac{\partial}{\partial{x_j}}\left(\frac{\partial u}{\partial{x_k}} \frac{\partial \bar{u}}{\partial{x_k}} \right) x_jdx - \int_{\R^N}|\nabla u|^2dx.\\
&=\frac{N-2}{2}\int_{\R^N}|\nabla u|^2dx.\\
\end{split}
\end{equation}

On the other hand, another integration by parts yields
\begin{equation}\label{VE8}
\begin{split}
\textrm{Re}\left\{\int_{\R^N} (|x|^{-b}|u|^{2\sigma}u)\nabla\bar{u}\cdot x dx \right\}
&=\frac{1}{2}\sum_{j=1}^N \int_{\R^N} x_j |x|^{-b}|u|^{2\sigma}\left(u\frac{\partial \bar{u}}{\partial x_j}+\bar{u} \frac{\partial u}{\partial x_j}\right)dx.\\
&=\frac{1}{2(\sigma+1)}\sum_{j=1}^N \int_{\R^N} x_j |x|^{-b}\frac{\partial}{\partial x_j}(|u|^{2\sigma+2})dx.\\
&=-\frac{1}{2(\sigma+1)}\int_{\R^N} \left(N|x|^{-b}|u|^{2\sigma+2} +x\cdot \nabla(|x|^{-b})|u|^{2\sigma+2}\right)dx.\\
\end{split}
\end{equation}

Next, collecting \eqref{VE4}-\eqref{VE8} we have
\begin{equation*}
\begin{split}
\frac{d^2}{dt^2}\int_{\R^N}|x|^2|u(x,t)|^2dx&=8\int_{\R^N}|\nabla|^2dx +4\left(\dfrac{N}{\sigma+1}-N\right)\int_{\R^N} |x|^{-b}|u|^{2\sigma+2}\;dx\\
&+\dfrac{4}{\sigma+1}\int_{\R^N} x\cdot \nabla(|x|^{-b})|u|^{2\sigma+2}\;dx.
\end{split}
\end{equation*}

Finally, noting that $x\cdot \nabla(|x|^{-b})=-b|x|^{-b}$ we finish the proof of relation \eqref{VE2}.

\end{proof}


\section{Blow up in $H^1(\R^N)$}\label{blowup2}

We start this section with some preliminary results. The first one is a calculus fact.

\begin{lemma}\label{Parabola}
Let $f(x)=\frac{1}{2}x^2-ax^{\alpha}$, where $a>0$ and $\alpha>2$. Define $p(x)$ the tangent parabola at the positive local maximum of $f$, namely $(x_{\textrm{max}}, f(x_{\textrm{max}}))$, that pass through the positive root of $f$, namely $(x_{\textrm{root}},0)$ with $x_{\textrm{root}}>0$, then
$$
f(x)\geq p(x), \peq \textrm{for all} \peq x \in [x_{\textrm{max}}, x_{\textrm{root}}].
$$
\end{lemma}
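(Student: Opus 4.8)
The plan is to work directly with the auxiliary function $g=f-p$ and reduce the inequality to a sign analysis of $g'$, exploiting that $f'''<0$ on $(0,\infty)$. First I would locate the two distinguished points explicitly. Differentiating $f(x)=\tfrac12 x^2-ax^{\alpha}$ gives $f'(x)=x\bigl(1-a\alpha x^{\alpha-2}\bigr)$, so the positive critical point is $x_{\textrm{max}}=(a\alpha)^{-1/(\alpha-2)}$, while $f(x)=0$ with $x>0$ forces $ax^{\alpha-2}=\tfrac12$, i.e. $x_{\textrm{root}}=(2a)^{-1/(\alpha-2)}$. Since $\alpha>2$ one has $x_{\textrm{max}}<x_{\textrm{root}}$, and a one-line computation gives $f(x_{\textrm{max}})=\tfrac{\alpha-2}{2\alpha}\,x_{\textrm{max}}^2>0$, confirming that the local maximum is positive.

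Next I would make the parabola explicit. Because $f'(x_{\textrm{max}})=0$, tangency of $p$ to $f$ at the maximum means $p$ has a horizontal tangent there, so $p(x)=f(x_{\textrm{max}})+c\,(x-x_{\textrm{max}})^2$; the condition $p(x_{\textrm{root}})=0$ then fixes $c=-f(x_{\textrm{max}})/(x_{\textrm{root}}-x_{\textrm{max}})^2<0$. Setting $g=f-p$, I record the three boundary facts $g(x_{\textrm{max}})=0$, $g'(x_{\textrm{max}})=0$ and $g(x_{\textrm{root}})=0$, and note $g''=f''-2c$.

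The heart of the argument is a concavity and sign analysis of $g'$. Since $f''(x)=1-a\alpha(\alpha-1)x^{\alpha-2}$ has $f'''(x)=-a\alpha(\alpha-1)(\alpha-2)x^{\alpha-3}<0$ for $x>0$, the function $g''$ is strictly decreasing, so $g'$ is strictly concave and therefore has at most two zeros on $(0,\infty)$. One such zero is $x_{\textrm{max}}$; applying Rolle's theorem to $g$ on $[x_{\textrm{max}},x_{\textrm{root}}]$ (both endpoints being zeros of $g$) produces a second zero $\xi\in(x_{\textrm{max}},x_{\textrm{root}})$, so $g'$ has \emph{exactly} the two zeros $x_{\textrm{max}}<\xi$. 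A strictly concave function vanishing at exactly these two points is positive on $(x_{\textrm{max}},\xi)$ and negative on $(\xi,x_{\textrm{root}}]$. Hence $g$ increases on $[x_{\textrm{max}},\xi]$ and decreases on $[\xi,x_{\textrm{root}}]$; combined with $g(x_{\textrm{max}})=g(x_{\textrm{root}})=0$ this forces $g\ge 0$ throughout $[x_{\textrm{max}},x_{\textrm{root}}]$, which is exactly $f\ge p$.

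The routine part is the explicit evaluation of $x_{\textrm{max}}$, $x_{\textrm{root}}$ and $c$. The step I expect to require the most care is pinning down the interpretation of \emph{tangent parabola} (first-order tangency at the vertex, not osculation) and then justifying that $g'$ has exactly two zeros with the stated sign pattern; this is where the monotonicity $f'''<0$ is essential, since without the ``at most two zeros'' bound the sign of $g'$ between the endpoints would not be determined.
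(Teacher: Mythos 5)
Your proof is correct, and it takes a genuinely different route from the paper's. The paper first rescales to the normalized function $F_\alpha(x)=\tfrac12 x^2-\tfrac1\alpha x^\alpha$, then argues by contradiction: it establishes two \emph{local} inequalities, $P_\alpha<F_\alpha$ near the maximum (by comparing second derivatives at the tangency point) and near the root (by comparing first derivatives there), each of which reduces to an explicit elementary inequality that is asserted to hold for all $x>0$; a negative value of $G=F_\alpha-P_\alpha$ in between would then give $G$ four zeros, and a Rolle cascade ($G\to G'\to G''\to G'''$) forces a zero of $G'''=-(\alpha-1)(\alpha-2)x^{\alpha-3}$, a contradiction. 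You instead work directly with $g'=(f-p)'$: the same key fact $f'''<0$ makes $g'$ strictly concave, hence with at most two zeros; tangency gives one zero at $x_{\textrm{max}}$ and a single application of Rolle on $[x_{\textrm{max}},x_{\textrm{root}}]$ gives the other, so the sign pattern of $g'$ (positive between its zeros, negative after) is completely determined, and $g\ge 0$ follows from monotonicity plus the boundary zeros. The trade-off: your argument is shorter and fully self-contained -- it avoids entirely the two auxiliary inequalities
$\bigl(1+\tfrac{1}{\sqrt{x+2}}\bigr)^{x}\ge \tfrac{x+2}{2}$ and $(1+x)^{\frac{1}{2x}+1}\bigl((1+x)^{\frac{1}{2x}}-1\bigr)>1$
that the paper leaves unverified -- whereas the paper's version isolates the endpoint behavior explicitly, at the cost of those computations. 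Both proofs ultimately rest on the same structural fact, namely that the third derivative of $f-p$ has a fixed sign on $(0,\infty)$.
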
 

\begin{proof}[Proof]
Let $F_\alpha(x)=\frac 12x^2-\frac 1{\alpha} x^{\alpha}$ defined on $x\in [0,\infty)$, $B>0$ the unique positive number such that $F_{\alpha}(B)=0$ and $A=F_{\alpha}(1)$. Note that $A$ is the positive local maximum of $F_{\alpha}$.
Define $y=P_\alpha(x)$ the parabola with vertex $(1,A)$ and root $B$.

\begin{center}
  \begin{tikzpicture}[xscale=6,yscale=3,  domain=0:1.6, variable=\t, smooth]
\node[right, color=red] at (1.4,-0.5)  {$y=P_\alpha(x)$};  
\node[right, color=blue] at (1.4,-0.65)  {$y=F_\alpha(x)$};  
\draw[->,line width=1pt](-0.1,0)--(1.6,0) node[right] {$x$};  
  \draw[->,line width=1pt](0,-0.65)--(0,0.4) node[above] {$y$}; 
\clip (-0.2,-0.65) rectangle (2,0.4);
\draw[color=blue] plot(\t, {\t^2/2-\t^(10)/(10)}); 
\draw [color=red]plot({\t}, {((1/2)-(1/10))*(1-(\t-1)^2/((10/2)^(1/(10-2))-1)^2)});
  \end{tikzpicture}
  \end{center}

It is clear the $A$ and $B$ can be given explicit in terms of $\alpha$. Indeed
$$
A=\frac 12-\frac 1\alpha \peq \textrm{ and } \peq B=\left({\frac \alpha 2}\right)^{\frac{1}{\alpha-2}}.
$$

Moreover, using the change of variables
$$
(x,y)\mapsto \left(\frac x{\sqrt[\alpha-2]{a\alpha}}, \frac y{\sqrt[\alpha-2]{a^2\alpha^2}}\right)
$$
we can reduce our problem to prove that $P_\alpha(x) \leq F_\alpha(x)$, for all $x\in [1,B]$.

Note that $y=F_\alpha(x)$ has an unique positive maximum at $(1,A)$, therefore the graphs of $y=F_\alpha$ and $y=P_\alpha$ are tangents at this point. Moreover, 
$$
P_\alpha(x)=A\left(1-\frac {(x-1)^2}{(B-1)^2)}\right).
$$

It is enough to prove that
\begin{equation}\label{PF1}
P_\alpha(x)<F_\alpha(x), \textrm{ for all } x \textrm{ close to } 1.
\end{equation}
and 
\begin{equation}\label{PF2}
P_\alpha(x)<F_\alpha(x), \textrm{ for all } x \textrm{ close and below } B.
\end{equation}

Indeed, define $G(x)=F_\alpha(x)-P_\alpha(x)$ and assume by contradiction that there exists $t_0\in (0,B)$ such that $G(t_0)<0$. If \eqref{PF1} and \eqref{PF2} hold, there exists $t_1$ close to $1$ and $t_2$ close and below $B$ such that $G(t_1)>0$ and $G(t_2)>0$. In this case, we can assume $1<t_1<t_0<t_2<B$. Therefore, there exist $r_1\in (t_1,t_0)$ and $r_2\in (t_0,t_2)$ such that $G(r_1)=G(r_2)=0=G(1)=G(B)$. By the Mean Value Theorem, $G'''(x)$ has at least one root in $(1,B)$. However $G'''(x)=-(\alpha-1)(\alpha-2) x^{\alpha-3}$ does not vanish in this interval, which is a contradiction.

To prove \eqref{PF1}, since the graphs are tangent at $(1,A)$, we just need to conclude 
$$
|P''_\alpha(1)|>|F''_\alpha(1)|.
$$

Using the expressions of $F_\alpha$ and $P_\alpha$ and taking $x=\alpha-2$ this is equivalent to 
$$
\left(1+\frac 1{\sqrt {x+2}}\right)^{x}\ge \frac {x+2}{2},
$$
which holds for all $x>0$.

On the other hand, since $F_\alpha(B)=P_\alpha(B)=0$, to obtain \eqref{PF2}, it is enough to prove
$$
F'_\alpha(B)<P'_\alpha(B).
$$ 

Again using the expressions of $F_\alpha$ and $P_\alpha$, the last inequality reduces to
$$
\frac B2>\frac 1{\alpha(B-1)}.
$$ 

Since $B=\left({\frac \alpha 2}\right)^{\frac{1}{\alpha-2}}$ and taking $x=\frac {\alpha-2}2$, this is equivalent to
$$
\displaystyle(1+x)^{\frac 1{2x}+1}\left((1+x)^{\frac 1{2x}}-1\right)>1,
$$
which holds for all $x>0$. 
\end{proof}

Next, we apply the previous result to prove an ``energy trapping'' inequality related to the (INLS) equation in the spirit of Kenig and Merle \cite{KM06} and Cazenave, Fang and Xie \cite{CFX}.
\begin{proposition}\label{Parabola2}
Let $u\in H^1(\R^N)$ and $Q$ be the unique non-negative, radially-symmetric, decreasing solution of the equation \eqref{ground}. Then
\begin{itemize}
\item[(a)] If $E[u]\leq 0$ then 
$$\|\nabla u\|_{L^2(\R^N)}^{s_{\sigma}}\|u\|_{L^2(\R^N)}^{1-s_{\sigma}} \geq c_{\sigma,b,N}\|\nabla Q\|_{L^2(\R^N)}^{s_{\sigma}}\|Q\|_{L^2(\R^N)}^{1-s_{\sigma}},
$$
where $c_{\sigma,b,N}=\left(\dfrac{N\sigma+b}{2}\right)^{1/(N\sigma+b-2)}.$

\item[(b)]If $E[u] > 0$ and $E[u]^{s_{\sigma}} M[u]^{1-s_{\sigma}} < E[Q]^{s_{\sigma}} M[Q]^{1-s_{\sigma}}$ 
then 
$$\|\nabla u\|_{L^2(\R^N)}^{s_{\sigma}}\|u\|_{L^2(\R^N)}^{1-s_{\sigma}} > c_{\sigma,b,N}\|\nabla Q\|_{L^2(\R^N)}^{s_{\sigma}}\|Q\|_{L^2(\R^N)}^{1-s_{\sigma}},
$$
where 
$$
c_{\sigma,b,N,Q,u}=\left(1+\left(1-\dfrac{E[u] M[u]^{\frac{s_{\sigma}}{1-s_{\sigma}}}}{E[Q]M[Q]^{\frac{s_{\sigma}}{1-s_{\sigma}}}}\right)^{1/2}\left(\left(\dfrac{N\sigma+b}{2}\right)^{1/(N\sigma+b-2)}-1\right)\right)^{s_\sigma}.
$$
\end{itemize}
\end{proposition}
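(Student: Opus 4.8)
The strategy is to insert the sharp Gagliardo--Nirenberg inequality \eqref{GN} into the energy functional \eqref{EC} and thereby reduce both assertions to the elementary geometry of the one–variable function $f(x)=\frac12x^2-ax^{\alpha}$ analyzed in Lemma \ref{Parabola}, with $\alpha:=N\sigma+b$. Note first that $\sigma>(2-b)/N$ gives $\alpha>2$, exactly the hypothesis of that lemma. Bounding the potential part of \eqref{EC} by \eqref{GN} yields
\[
E[u]\ \ge\ \tfrac12\|\nabla u\|_{L^2(\R^N)}^2-a\,\|\nabla u\|_{L^2(\R^N)}^{\alpha}=:f\big(\|\nabla u\|_{L^2(\R^N)}\big),\qquad a=\frac{K_{\rm opt}}{2\sigma+2}\,\|u\|_{L^2(\R^N)}^{2\sigma+2-\alpha}.
\]
An elementary computation locates the positive local maximum of $f$ at $x_{\max}=(a\alpha)^{-1/(\alpha-2)}$, with $f(x_{\max})=\frac{\alpha-2}{2\alpha}x_{\max}^2$, and its positive root at $x_{\rm root}=(2a)^{-1/(\alpha-2)}$; hence $x_{\rm root}=c_{\sigma,b,N}\,x_{\max}$ with $c_{\sigma,b,N}=(\alpha/2)^{1/(\alpha-2)}$.

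The crux is to identify these abstract quantities with the ground–state thresholds. Substituting the explicit constant \eqref{opt1} into $a$ and using $s_{\sigma}=(\alpha-2)/(2\sigma)$ together with \eqref{groundR1}, I expect to verify the two scale–invariant identities
\[
x_{\max}^{\,s_{\sigma}}\|u\|_{L^2(\R^N)}^{1-s_{\sigma}}=\|\nabla Q\|_{L^2(\R^N)}^{s_{\sigma}}\|Q\|_{L^2(\R^N)}^{1-s_{\sigma}},\qquad f(x_{\max})\,M[u]^{(1-s_{\sigma})/s_{\sigma}}=E[Q]\,M[Q]^{(1-s_{\sigma})/s_{\sigma}},
\]
the second being essentially \eqref{Eground}. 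In particular $f(x_{\max})$ depends on $u$, but the displayed product does not, so the mass–energy hypothesis $E[u]^{s_{\sigma}}M[u]^{1-s_{\sigma}}<E[Q]^{s_{\sigma}}M[Q]^{1-s_{\sigma}}$ (raised to the power $1/s_{\sigma}$) is precisely the statement $E[u]<f(x_{\max})$.

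For part (a), if $E[u]\le0$ then $f(\|\nabla u\|_{L^2(\R^N)})\le E[u]\le 0$; since $f>0$ on $(0,x_{\rm root})$ and $\|\nabla u\|_{L^2(\R^N)}>0$ for $u\neq0$, we must have $\|\nabla u\|_{L^2(\R^N)}\ge x_{\rm root}=c_{\sigma,b,N}\,x_{\max}$. Raising to the power $s_{\sigma}$, multiplying by $\|u\|_{L^2(\R^N)}^{1-s_{\sigma}}$ and using the first identity above gives the claimed lower bound, with the constant emerging naturally as $c_{\sigma,b,N}^{\,s_{\sigma}}$ (the $E[u]=0$ endpoint of the constant in part (b)).

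For part (b) one works on the branch $\|\nabla u\|_{L^2(\R^N)}\ge x_{\max}$ — equivalently the gradient quantity lies above the $Q$–threshold, which is condition \eqref{BR2}; this must be assumed, since the mass–energy bound alone only yields the dichotomy $\|\nabla u\|_{L^2(\R^N)}\le x_-$ or $\ge x_+$ around $x_{\max}$. On $[x_{\max},x_{\rm root}]$ Lemma \ref{Parabola} provides $f(x)\ge p(x)$, where $p(x)=f(x_{\max})\big(1-(x-x_{\max})^2/(x_{\rm root}-x_{\max})^2\big)$ is the tangent parabola. Combining this with $p(\|\nabla u\|_{L^2(\R^N)})\le f(\|\nabla u\|_{L^2(\R^N)})\le E[u]<f(x_{\max})$ and solving the resulting quadratic inequality on the decreasing branch gives
\[
\|\nabla u\|_{L^2(\R^N)}\ \ge\ x_{\max}\Big[1+(c_{\sigma,b,N}-1)\big(1-E[u]/f(x_{\max})\big)^{1/2}\Big].
\]
Raising to $s_{\sigma}$, multiplying by $\|u\|_{L^2(\R^N)}^{1-s_{\sigma}}$ and inserting the two identities converts $E[u]/f(x_{\max})$ into the mass–energy ratio and reproduces $c_{\sigma,b,N,Q,u}$ (with the mass exponent read as $(1-s_{\sigma})/s_{\sigma}$); the remaining case $\|\nabla u\|_{L^2(\R^N)}>x_{\rm root}$ is immediate, since then $\|\nabla u\|_{L^2(\R^N)}>c_{\sigma,b,N}\,x_{\max}$ already exceeds the right-hand side. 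The main obstacle is the bookkeeping in the two identifications of the middle paragraph — getting the powers of $\|Q\|_{L^2(\R^N)}$ and $\|u\|_{L^2(\R^N)}$ to collapse correctly after substituting \eqref{opt1} — together with the recognition that (b) is intrinsically the upper-branch statement.
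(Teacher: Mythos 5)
Your proposal is correct and follows essentially the same route as the paper: insert the sharp Gagliardo--Nirenberg bound into the energy, reduce to the one-variable function $f(x)=\tfrac12 x^2-a\,x^{N\sigma+b}$, and invoke Lemma \ref{Parabola} on $[x_{\max},x_{\mathrm{root}}]$; the only structural difference is that the paper works with the scale-invariant products $E[u]M[u]^{\theta}$ and $\|\nabla u\|_{L^2(\R^N)}\|u\|_{L^2(\R^N)}^{\theta}$ from the start, whereas you keep the $u$-dependent coefficient $a$ and restore scale invariance at the end through your two identities, both of which do check out upon substituting \eqref{opt1}, \eqref{groundR1} and \eqref{Eground}. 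Moreover, your three ``bookkeeping'' corrections are genuinely needed and are implicitly present in the paper's own argument: part (b) does require the additional hypothesis \eqref{BR2} (the paper's proof silently treats only $\|\nabla u\|_{L^2(\R^N)}\|u\|_{L^2(\R^N)}^{(1-s_\sigma)/s_\sigma}>x_{\max}$, and without \eqref{BR2} the statement fails, e.g.\ for $u=\epsilon Q$ with $\epsilon$ small); the constant in part (a) comes out as $c_{\sigma,b,N}^{\,s_\sigma}$, exactly as you note; and the mass exponent must be read as $(1-s_\sigma)/s_\sigma$ rather than $s_\sigma/(1-s_\sigma)$, as you use.
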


\begin{remark}
Note that $c_{\sigma,b,N}, c_{\sigma,b,N,Q,u}>1$ since $\sigma>\dfrac{2-b}{N}$. 
\end{remark}

\begin{proof}[Proof]
Recalling the definition of $E[u]$ and multiplying both sides by $M[u]^{\frac{s_{\sigma}}{1-s_{\sigma}}}$ we obtain
\begin{equation*}
\begin{split}
E[u] M[u]^{\frac{s_{\sigma}}{1-s_{\sigma}}}&= \frac{1}{2}\left(\|\nabla u\|_{L^2(\R^N)}\| u\|_{L^2(\R^N)}^{\frac{s_{\sigma}}{1-s_{\sigma}}}\right)^2-\frac{1}{2\sigma+2} \| u\|_{L^2(\R^N)}^{\frac{2s_{\sigma}}{1-s_{\sigma}}}\int_{\R^N} |x|^{-b}|u(x|^{2\sigma+2}\;dx\\
&\geq \frac{1}{2}\left(\|\nabla u\|_{L^2(\R^N)}\| u\|_{L^2(\R^N)}^{\frac{s_{\sigma}}{1-s_{\sigma}}}\right)^2 -\frac{K_{\rm opt}}{2\sigma+2}\,
\left(\|\nabla u\|_{L^2(\R^N)}\| u\|_{L^2(\R^N)}^{\frac{s_{\sigma}}{1-s_{\sigma}}}\right)^{N\sigma+b}.
\end{split}
\end{equation*}

Therefore
\begin{equation}\label{EBU1}
E[u] M[u]^{\frac{s_{\sigma}}{1-s_{\sigma}}}\geq f(\|\nabla u\|_{L^2(\R^N)}\| u\|_{L^2(\R^N)}^{\frac{s_{\sigma}}{1-s_{\sigma}}}),
\end{equation}
where $f(x)=\frac{1}{2}x^2-\frac{K_{\rm opt}}{2\sigma+2}x^{N\sigma+b}$

Some straightforward computations revel that $f$ has a local maximum in 
$$
x_{\textrm{max}}=\|\nabla Q\|_{L^2(\R^N)}\| Q\|_{L^2(\R^N)}^{\frac{s_{\sigma}}{1-s_{\sigma}}}
$$
with maximum value 
$$
f(x_{\textrm{max}})=E[Q] M[Q]^{\frac{s_{\sigma}}{1-s_{\sigma}}}.
$$ 

Moreover $f$ has a positive root, denoted by $x_{\textrm{root}}$, where
$$
x_{\textrm{max}}<x_{\textrm{root}}=\left(\dfrac{N\sigma+b}{2}\right)^{1/(N\sigma+b-2)}\|\nabla Q\|_{L^2(\R^N)}\| Q\|_{L^2(\R^N)}^{\frac{s_{\sigma}}{1-s_{\sigma}}}.
$$

\begin{center}
\begin{tikzpicture}[xscale=3,yscale=3,  domain=-0.1:2, variable=\t]
\node[right, color=blue] at (2,-0.65)  {$y=f(x)$};  
\draw[->,line width=1.5pt](-0.1,0)--(2,0) node[right] {$x$};  
\draw[->,line width=1.5pt](0,-0.65)--(0,0.3) node[above] {$y$}; 
\clip (-0.2,-0.65) rectangle (2,0.3);
\draw[color=blue] plot(\t, {\t^2/2-\t^3/3}); 
\end{tikzpicture}
\end{center}

Now if $E[u]\leq 0$ than $f(\|\nabla u\|_{L^2(\R^N)}\| u\|_{L^2(\R^N)}^{\frac{s_{\sigma}}{1-s_{\sigma}}})\leq 0$, so its is clear that (a) holds. Next we turn our attention to the proof of part (b). To simplify our notation, for any function $\phi \in H^1(\R^N)$,  let us define the following quantities
$$
|E\phi|= E[\phi] M[\phi]^{\frac{s_{\sigma}}{1-s_{\sigma}}} \peq \textrm{ and } \peq
|\phi|=\|\nabla u\|_{L^2(\R^N)}\| u\|_{L^2(\R^N)}^{\frac{s_{\sigma}}{1-s_{\sigma}}}.
$$

If $|u|>x_{\textrm{root}}$ we are done. On the other hand, if 
$$
|Q|= x_{\textrm{max}}<|u|<x_{\textrm{root}}=\left(\dfrac{N\sigma+b}{2}\right)^{1/(N\sigma+b-2)}|Q|
$$ 
let $y=p(x)$ be the parabola with vertex $(|Q|,|EQ|)$ and root $x_r$. The equation of $y=p(x)$ is given explicit by 
$$
p(x)=a(x-|Q|)^2+|EQ|,
$$
where $a$ is given by the relation $p(x_r)=0$, that is 
\begin{equation}\label{a}
a=-\dfrac{|EQ|}{(x_r-|Q|)^2}.
\end{equation}

By Lemma \ref{Parabola} and inequality \eqref{EBU1} we have
$$
|Eu|\geq f(|u|)\geq p(|u|)= a(|u|-|Q|)^2+|EQ|,
$$
which together with the definition of $a$ (see relation \eqref{a}) yields part (b).
\end{proof}

Now, we have all tools to prove our blow-up result.

\begin{proof}[Proof of Theorem \ref{blowup}]
Suppose by contradiction that the solution $u(t)$ of equation \eqref{INLS} with initial data satisfying hypotheses \eqref{BR1}-\eqref{BR2} exists globally. Multiplying the Virial identity \eqref{VE3} by $M[u]^{\frac{s_{\sigma}}{1-s_{\sigma}}}$ and using Proposition \ref{Parabola2} we have for all $t>0$
\begin{equation*}
\begin{split}
\left(\frac{d^2}{dt^2}\int_{\R^N}|x|^2|u(x,t)|^2dx\right)M[u_0]^{\frac{s_{\sigma}}{1-s_{\sigma}}}&=8(N\sigma+b)E[u_0]M[u_0]^{\frac{s_{\sigma}}{1-s_{\sigma}}}\\
&-4(N\sigma+b-2)\left(\|\nabla u(t)\|_{L^2(\R^N)}\|u_0\|_{L^2(\R^N)}^{\frac{s_{\sigma}}{1-s_{\sigma}}}\right)^2\\
&<8(N\sigma+b)E[Q]M[Q]^{\frac{s_{\sigma}}{1-s_{\sigma}}}\\
&-4(N\sigma+b-2)A\left(\|\nabla Q\|_{L^2(\R^N)}\|Q\|_{L^2(\R^N)}^{\frac{s_{\sigma}}{1-s_{\sigma}}}\right)^2,
\end{split}
\end{equation*}
for some number $A=A(\sigma,b,N,Q,u_0)>1$, given by Proposition \ref{Parabola2}.

Recalling relation \eqref{Eground} it is easy to see that
$$
8(N\sigma+b)E[Q]M[Q]^{\frac{s_{\sigma}}{1-s_{\sigma}}}=4(N\sigma+b-2)\left(\|\nabla Q\|_{L^2(\R^N)}\|Q\|_{L^2(\R^N)}^{\frac{s_{\sigma}}{1-s_{\sigma}}}\right)^2
$$

Therefore
\begin{equation}\label{blowup3}
\begin{split}
\left(\frac{d^2}{dt^2}\int_{\R^N}|x|^2|u(x,t)|^2dx\right)M[u_0]^{\frac{s_{\sigma}}{1-s_{\sigma}}}&<-4(N\sigma+b-2)(A-1)\left(\|\nabla Q\|_{L^2(\R^N)}\|Q\|_{L^2(\R^N)}^{\frac{s_{\sigma}}{1-s_{\sigma}}}\right)^2\\
&=-B,
\end{split}
\end{equation}
for some number $B=B(\sigma,b,N,Q,u_0)>0$.

Finally, integrating \eqref{blowup3} twice and taking $t$ large we reach a contradiction.

\end{proof}


\subsection*{Acknowledgments}
The author was partially supported by Conselho Nacional de Desenvolvimento Cient\'ifico e Tecnol\'ogico (CNPq/Brazil) and Funda\c{c}\~ao de Amparo \`a Pesquisa do Estado de Minas Gerais (FAPEMIG/Brazil). The author thanks F\'abio Brochero (UFMG) for valuable assistance with the proof of Lemma \ref{Parabola} and for his support in drafting the figures included in this paper. 


\bibliographystyle{mrl}

\end{document}